\newtheoremstyle{mio}%
{}{} 
{\itshape}{} 
{\bfseries}{.}{ } 
{#1 #2\thmnote{~\mdseries(#3)}} 
\theoremstyle{mio}
\newtheorem{teor}{Theorem}[section]
\newtheorem{prop}[teor]{Proposition}
\newtheoremstyle{definition2}%
{}{} 
{}{} 
{\bfseries}{.}{ } 
{#1 #2\thmnote{\mdseries~ #3}} 
\theoremstyle{definition2}
\newtheorem{ex}[teor]{Example}
\title{The reciprocal complement of a curve}
\author{Dario Spirito}
\address{Dipartimento di Scienze Matematiche, Fisiche e Informatiche, Universit\`a di Udine, Udine, Italy}
\email{dario.spirito@uniud.it}
\keywords{Reciprocal complement; projective curve; normalization; Weierstrass semigroup}
\subjclass[2020]{Primary: 14H20, 13G05. Secondary: 13F30 14B05, 14H55}
\newcommand{\recip}{\mathcal{R}}
\newcommand{\local}{\mathcal{O}}
\newcommand{\quot}{\mathcal{Q}}
\newcommand{\mm}{\mathfrak{m}}
\newcommand{\aff}{\mathbb{A}}
\newcommand{\proj}{\mathbb{P}}
\begin{document}
\begin{abstract}
We give a geometric interpretation of the reciprocal complement of an integral domain $D$ in the case $D$ is a one-dimensional finitely generated algebra over an algebraically closed field.
\end{abstract}

\maketitle
\section{Introduction}
Let $D$ be an integral domain. The \emph{reciprocal complement} of $D$ is the ring $\recip(D)$ generated by the inverses of the nonzero elements of $D$ \cite{GLO-egyptian,epstein-egypt-23b}; this notion arises in the context of studying whether a domain is \emph{Egyptian}, i.e., if it is the case that every element of the domain can be written as a sum of reciprocals of elements of the domain. In particular, a domain is Egyptian if and only if $\recip(D)$ is the whole quotient field of $D$.

While $\recip(D)$ is always a local ring, its structure is often complicated and still mostly poorly understood. For example, if $D=k[x_1,\ldots,x_n]$ is a polynomial ring in $n>1$ indeterminates, the ring $\recip(D)$ is an $n$-dimensional domain that is non-Noetherian, non-integrally closed, with infinitely many primes of height $i$ for each $0<i<n$, but nevertheless atomic \cite{EGL-RD-polinomyal}.

In this note, we consider integral domains $D$ that are finitely generated one-dimensional algebras over an algebraically closed field $k$, and connect the study of $\recip(D)$ with the geometric properties of the affine curves $X$ whose ring of regular functions is $D$. We show that we can discriminate whether $\recip(D)$ is a field or not on the basis of the points of $\overline{X}\setminus X$, where $\overline{X}$ is the projective closure of $X$; moreover, when $\recip(D)$ is not a field and $X$ is nonsingular, we connect its properties with the Weierstrass semigroup of $\overline{X}$ at the point of infinity. In particular, we show that, for Dedekind domains, the condition of $\recip(D)$ being a DVR is equivalent to $X$ having genus $0$, giving a geometric interpretation of \cite[Theorem 6.1]{guerrieri-recip}.

\bigskip

Throughout the paper, $k$ is an algebraically closed field, and a \emph{curve} is an irreducible variety of dimension $1$. If $X\subseteq\aff^n_k$ is an affine variety, we denote by $\overline{X}$ its projective closure (i.e., its closure in $\proj^n_k$). If $D$ is a finitely generated $k$-algebra, a \emph{realization} of $D$ is an affine variety $X\subset\aff^n_k$ such that $k[X]\simeq D$; we say that $X$ is \emph{regular at infinity} if the points of $\overline{X}\setminus X$ are regular in $\overline{X}$. Note that two realizations $X,X'$ of $D$ are isomorphic, but $\overline{X}$ and $\overline{X'}$ may not be: for example, $\aff^1_k$ and $V(x-y^3)\subset\aff^2_k$ are both realizations of $k[x]$, but $\overline{X}=\proj^1$ is regular while $\overline{X'}$ is not. We say that an affine curve $X$ is \emph{Egyptian} if $k[X]$ is an Egyptian domain.

If $D$ is an integral domain, we denote by $\quot(D)$ its quotient field; if $D$ is local, we denote by $\mm_D$ its maximal ideal. A \emph{valuation ring} is an integral domain $V$ such that, for every $f\in\quot(V)$, at least one of $f$ and $1/f$ is in $V$; a valuation domain is always local. A \emph{discrete valuation ring} (DVR) is a Noetherian valuation domain; a DVR is either a field or one-dimensional. If $V$ is a DVR, there is a function $\mathbf{v}:\quot(V)\setminus\{0\}\longrightarrow\insZ$  such that, for every $a,b\in\quot(D)$, we have $\mathbf{v}(ab)=\mathbf{v}(a)+\mathbf{v}(b)$ and $\mathbf{v}(a+b)\geq\min\{\mathbf{v}(a),\mathbf{v}(b)\}$ (provided $a+b\neq 0)$. If $D$ is a domain and $K$ a field containing $D$, we denote by $\Zar(K|D)$ the set of all valuation rings containing $D$ and having $K$ as its quotient field. If $\mathfrak{p}$ a prime ideal of $D$, there is always a $V\in\Zar(K|D)$ dominating $D_\mathfrak{p}$; in particular, $D$ is a field if and only if $\Zar(\quot(D)|D)=\{\quot(D)\}$. The intersection of all valuation domains in $\Zar(K|D)$ is equal to the integral closure of $D$ in $K$. If $K,L$ are fields with $\mathrm{trdeg}(L/K)=1$, then every element of $\Zar(L|K)$ is either $L$ or a one-dimensional DVR. See \cite[Chapter 5]{atiyah} and \cite[Chapter 3]{gilmer} for further properties of valuation domains.

\section{Results}
The main theorem of this note is the following.
\begin{teor}\label{teor:main}
Let $k$ be an algebraically closed field, and let $D$ be a finitely generated integral one-dimensional $k$-algebra. Then, the following are equivalent:
\begin{enumerate}[(i)]
\item\label{teor:main:RD} $\recip(D)\neq\quot(D)$ (i.e., $D$ and $X$ are not Egyptian);
\item\label{teor:main:Xn} if $X$ is a realization of $D$ that is regular at infinity, then $|\overline{X}\setminus X|=1$;
\item\label{teor:main:norm} if $X$ is a realization of $D$ and $\nu:Y\longrightarrow \overline{X}$ is a normalization of $\overline{X}$, then $|\nu^{-1}(\overline{X}\setminus X)|=1$.
\end{enumerate}
Moreover, if these conditions hold and $\overline{X}\setminus X=\{p\}$, then the integral closure of $\recip(D)$ is the local ring $\local_{\overline{X},p}$.
\end{teor}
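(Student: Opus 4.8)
The plan is to pass to the smooth projective model $Y$ of the function field $K:=\quot(D)$, whose points are exactly the DVRs $\local_{Y,q}\in\Zar(K\mid k)$ (since $\mathrm{trdeg}(K/k)=1$, these together with $K$ exhaust $\Zar(K\mid k)$), and to express everything through the valuations $\mathbf{v}_q$. For any realization $X$ the normalization $\nu\colon Y\to\overline{X}$ identifies $\Sigma:=\nu^{-1}(\overline{X}\setminus X)$ with $Y$ minus the points lying over $X$; as $\Sigma$ is the set of places at which some element of the integral closure $\overline{D}$ has a pole, it is intrinsic to $D$, so condition (iii) reads simply $|\Sigma|=1$. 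Because $\recip(D)$ is generated as a ring by the elements $1/f$ and contains $k$, a DVR $\local_{Y,q}$ contains $\recip(D)$ if and only if $\mathbf{v}_q(f)\le 0$ for every $f\in D$; writing $S$ for the set of such $q$, the criterion that $\recip(D)$ is a field exactly when $\Zar(K\mid\recip(D))=\{K\}$ turns (i) into the assertion that $S\neq\varnothing$.

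First I would show $S\subseteq\Sigma$: if $q$ lies over a point $p\in X$ then $\local_{Y,q}$ dominates $\local_{X,p}$, so any nonzero element of the maximal ideal of $D$ at $p$ has positive value at $q$, whence $q\notin S$. The heart of the argument is the dichotomy $S\neq\varnothing\iff|\Sigma|=1$. If $\Sigma=\{q_0\}$, every nonconstant $f\in\overline{D}$ has all its poles concentrated at $q_0$, so $\sum_q\mathbf{v}_q(f)=0$ forces $\mathbf{v}_{q_0}(f)<0$; since $D\subseteq\overline{D}$ this gives $\mathbf{v}_{q_0}(f)\le 0$ for all $f\in D$, and $q_0\in S$. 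Conversely, if $|\Sigma|\ge 2$, fix $q_0\in\Sigma$ and a second place $q_1\in\Sigma$; by Riemann--Roch the spaces $L(mq_1-sq_0)$ are nonzero for $m\gg s$, producing a nonconstant $f\in\overline{D}$ with poles only at $q_1$ and a zero of arbitrarily high order at $q_0$. Multiplying by a nonzero element $c$ of the conductor $(D:\overline{D})$ — nonzero because $\overline{D}$ is a finite $D$-module — yields $g=cf\in D$ with $\mathbf{v}_{q_0}(g)>0$ once the zero of $f$ outweighs the pole of $c$; hence $q_0\notin S$, and since $q_0$ was arbitrary, $S=\varnothing$. This establishes (i)$\iff$(iii).

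For (ii) I would use that $\nu$ is an isomorphism over the smooth locus of $\overline{X}$: when $X$ is regular at infinity, $\nu$ restricts to a bijection $\Sigma\to\overline{X}\setminus X$, so $|\overline{X}\setminus X|=|\Sigma|$, giving (iii)$\Rightarrow$(ii) directly (and vacuously when no such realization exists). The delicate point — which I expect to be the main obstacle — is the converse: to keep the hypothesis of (ii) from being vacuous I must exhibit at least one realization that is regular at infinity. I would construct one by adjoining to a set of generators of $D$ suitable functions having a pole of order exactly $1$ at each $q\in\Sigma$ (again via Riemann--Roch), chosen so as to separate the points of $\Sigma$ and to immerse a neighbourhood of each; then the projective closure is smooth at every point at infinity, and $|\overline{X}\setminus X|=|\Sigma|$, closing the loop. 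Controlling the closure at infinity while leaving $k[X]=D$ (hence the finite singularities) untouched is the technical crux.

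Finally, for the last assertion, the integral closure of $\recip(D)$ in $K$ is the intersection of all its valuation overrings, namely $\bigcap_{q\in S}\local_{Y,q}$. When the equivalent conditions hold we have shown $S=\Sigma=\{q_0\}$, so this integral closure is the single DVR $\local_{Y,q_0}$. Taking $X$ regular at infinity, the unique point $p=\nu(q_0)\in\overline{X}\setminus X$ is smooth with $\nu^{-1}(p)=\{q_0\}$, so $\local_{\overline{X},p}=\local_{Y,q_0}$, and the integral closure of $\recip(D)$ is exactly $\local_{\overline{X},p}$, as claimed.
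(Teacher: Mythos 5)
Your treatment of (i)$\Leftrightarrow$(iii) is correct and genuinely different from the paper's. The paper argues on $\overline{X}$ itself for a regular-at-infinity realization, using Goodman's theorem (the complement of a point in a projective curve is affine) to manufacture, for every candidate valuation ring, an element $f\in D$ with $1/f$ outside it, and then reduces (iii) to (ii) by blowing up the points at infinity; you instead work entirely on the smooth model $Y$, encode everything in the set $S$ of places with $\mathbf{v}_q|_D\le 0$, and settle the dichotomy $S\neq\varnothing\iff|\Sigma|=1$ by Riemann--Roch together with multiplication by a nonzero conductor element $c\in(D:\overline{D})$. That argument is sound ($c\neq 0$ because $\overline{D}$ is a finite $D$-module, and $\mathbf{v}_{q_0}(cf)\geq s+\mathbf{v}_{q_0}(c)>0$ for $s$ large), it makes the realization-independence of (iii) transparent rather than implicit, and it yields the ``Moreover'' clause cleanly as $\bigcap_{q\in S}\local_{Y,q}=\local_{Y,q_0}$; no blow-ups are needed for this part.

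The genuine gap is exactly where you flagged it: (ii)$\Rightarrow$(i) requires exhibiting at least one realization regular at infinity, and your proposed construction would fail. First, a function in $\overline{D}$ with a pole of order exactly $1$ at $q\in\Sigma$ need not exist: in the crucial case $\Sigma=\{q_0\}$ with genus $g\geq 1$, the Riemann--Roch space of the divisor $q_0$ has dimension $1$, so there is no such function on $Y$ at all. Second, even when Riemann--Roch supplies such functions they lie only in $\overline{D}$; adjoining a function not in $D$ to your generators changes the coordinate ring, so it no longer gives a realization of $D$, while the conductor trick that forces membership in $D$ shifts the pole order at $q$ by $-\mathbf{v}_q(c)$ (and alters orders at the other places of $\Sigma$), so ``order exactly $1$'' cannot be arranged. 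Nor is it the right condition: for $D=k[t^2,t^3]$ no element of $D$ has pole order $1$ at the unique place at infinity, yet the realization $V(y^2-x^3)$ \emph{is} regular at infinity, because smoothness at a point at infinity is governed by ratios of generators ($x/y=1/t$ is a uniformizer there), i.e.\ by differences of pole orders in $D$, not by individual pole orders. One could try to repair your plan by producing elements $cf_N,cf_{N+1}\in D$ with consecutive pole orders at each $q\in\Sigma$ and checking separation of the places, but this is delicate; the paper sidesteps the whole issue by blowing up the points of $\overline{X}\setminus X$ of an \emph{arbitrary} realization to obtain a projective curve $Z$ with $Z_0:=\phi^{-1}(X)\simeq X$ open and all points of $Z\setminus Z_0$ regular, so that $k[Z_0]\simeq D$ holds automatically and $Z_0$ (re-embedded with projective closure $Z$) is the desired realization, which simultaneously proves $|\nu^{-1}(\overline{X}\setminus X)|=|Z\setminus Z_0|$.
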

\begin{proof}
We first show that the first two conditions are equivalent. Let $X$ be a realization of $D$ that is regular at infinity; we distinguish two cases.

Suppose that $\overline{X}\setminus X=\{p\}$ is a single point, and let $f\in D$. Then, $f\in K[X]$ is a rational function over $\overline{X}$; we claim that $1/f\in\local_{\overline{X},p}$. If $f$ is constant this is trivial. If $f$ is not constant, then it can't be regular on the whole $\overline{X}$, and thus $f$ is not regular at $p$, and in particular $f\notin\local_{\overline{X},p}$. Since $X$ is regular at infinity, $p$ is a nonsingular point of $\overline{X}$, and thus $\local_{\overline{X},p}$ is a discrete valuation ring; therefore, $1/f\in\local_{\overline{X},p}$. It follows that $\recip(D)\subseteq\local_{\overline{X},p}$, and thus $\recip(D)\neq\quot(D)$.

Suppose that $|\overline{X}\setminus X|>1$. We can identify $K[X]$ with $D$: we claim that $\recip(D)$ is not contained in any $V\in\Zar(\quot(D)|k)\setminus\{\quot(D)\}$. Fix thus any such $V$: then, $V$ dominates a local ring $\local_{\overline{X},p}$ for some $p\in\overline{X}$ \cite[Chapter 7, Theorem 1]{fulton}. If $p\in X$, we can find $f\in K[X]\cap\mm_{\local_{X,p}}\subseteq V$ (just take a regular function on $X$ with a zero at $p$); then, $1/f\in\recip(D)\setminus\mm_V$. If $p\notin X$, choose a different point $q\in\overline{X}\setminus X$ (which exists since $|\overline{X}\setminus X|>1$), and let $Z:=\overline{X}\setminus\{q\}$. Then, $Z$ is affine \cite[Proposition 5]{goodman-affineopen}, and thus there is a nonconstant function $f\in K[Z]$ such that $f(p)=0$; since $X\subseteq Z$, we have $f\in K[X]=D$. Hence, $f\in\mm_{\local_{X,p}}$, and thus $1/f\notin\local_{\overline{X},p}$; it follows that $\recip(D)\nsubseteq\local_{\overline{X},p}$. Since $\recip(D)$ is not contained in any $V\in\Zar(\quot(D)|k)\setminus\{\quot(D)\}$, we must have $\recip(D)=\quot(D)$.

\bigskip

Let now $X$ be any realization of $D$. By blowing up repeatedly the points in $\overline{X}\setminus X$, we can find a projective curve $Z$ and a map $\phi:Z\longrightarrow\overline{X}$ such that $|\phi^{-1}(p)|=1$ for all $p\in X$, $\phi|_{\phi^{-1}(X)}$ is an isomorphism from $Z_0:=\phi^{-1}(X)$ to $X$ and every point of $Z\setminus Z_0=\phi^{-1}(\overline{X}\setminus X)$ is regular.

Then, $Z_0$ is a realization of $D$ (since $k[Z_0]\simeq k[X]\simeq D$), and is regular at infinity by construction, since $\overline{Z_0}=Z$. By the previous part of the proof, $\recip(D)\neq\quot(D)$ if and only if $|Z\setminus Z_0|=|\phi^{-1}(\overline{X}\setminus X)|=1$.

However, if $\nu':Y'\longrightarrow Z$ is a normalization, then $\phi\circ\nu_0:Y'\longrightarrow\overline{X}$ is a normalization; without loss of generality, we can suppose that $Y'=Y$ and $\phi\circ\nu_0=\nu$. Since every point of $Z\setminus Z_0$ is regular, $\nu_0$ is injective on $\nu_0^{-1}(Z\setminus Z_0)$; hence, $|\nu^{-1}(\overline{X}\setminus X)|=|Z\setminus Z_0|=|\phi^{-1}(\overline{X}\setminus X)|$. The claim follows.

\bigskip

We now prove the last statement. By the first part of the proof, $\recip(D)\subseteq\local_{\overline{X},p}$ (which is a DVR); we claim that $\recip(D)$ is not contained in any other valuation ring of dimension $1$.

Indeed, let $\nu:Y\longrightarrow\overline{X}$ be a normalization of $\overline{X}$; since $Y$ is regular, its points correspond bijectively to the one-dimensional discrete valuation rings of $k(Y)$ \cite[Corollary 4 to Theorem 1]{fulton}. In particular, if $q\in X$ then the valuation domains dominating $q$ are the local rings of the points of $\nu^{-1}(q)$. Let $W$ be any such ring, and let $f\in k[X]$ be a regular function such that $f(q)=0$. Then, $1/f\in\recip(D)$, but $f\in\mm_{\local_{X,q}}\subseteq \mm_W$ and thus $1/f\notin W$. Hence $\recip(D)$ is not contained in any other discrete valuation ring. Since $k(Y)$ has transcendence degree $1$ over $k$, it follows that the integral closure of $\recip(D)$ is $\local_{\overline{X},p}$.
\end{proof}

Theorem \ref{teor:main} provides a very actionable way to determine whether a one-dimensional algebra is Egyptian. We give a few examples.
\begin{ex}
Let $k$ be an algebraically closed field of characteristic $p$, and let $n>1$ be an integer such that $p\not|n$. Then, $X=V(x^n+y^n-1)\subset\aff^2_k$ is an affine curve with projective closure $\overline{X}=V(x_1^n+x_2^n-x_0^n)\subseteq\proj^2_k$. Since $\overline{X}$ is nonsingular, $X$ is a realization of $D=k[x,y]/(x^n+y^n-1)$ that is regular at infinity. Since $\overline{X}\setminus X$ has $n$ points (namely, $[0:1:\zeta]$ with $\zeta^n=1$), $X$ is Egyptian and $\recip(D)=\quot(D)$.
\end{ex}

\begin{ex}
Let $k$ be an algebraically closed field of characteristic $p\neq 2,3$, and let $X=V(y^2-x^3-ax-b)$ be a (possibly singular) elliptic curve in $\aff^2_k$. Then, $X$ is a realization of $D:=k[x,y]/(y^2-x^3-ax-b)$ that is regular at infinity. Since $\overline{X}\setminus X=\{[0:1:0]\}$ is a single point, $X$ is not Egyptian, and the reciprocal complement $\recip(D)$ of $D$ is not a field.
\end{ex}

\begin{ex}
Let $k$ be an algebraically closed field, and let $X=V(y-x^3)\subset\aff^2_k$. Then, $\overline{X}\setminus X=\{[0:0:1]\}$ is a single point that is not regular. Hence, $X$ is a realization of $k[X]\simeq k[t]$ (where $t$ is an indeterminate over $k$) that is not regular at infinity, but $\recip(D)=k[t^{-1}]_{(t^{-1})}$ is not a field.
\end{ex}

\begin{ex}
Let $k$ be an algebraically closed field, and let $X=V(x^3-xy-y)\subset\aff^2_k$. Then, $\overline{X}\setminus X=\{[0:0:1]\}$ is a single point; however, $[0:0:1]$ is a singular point, and there are two points of the normalization of $\overline{X}$ over $[0:0:1]$ (corresponding to the two branches). Therefore, if $D=K[x,y]/(x^3-xy-y)$, we have $\recip(D)=\quot(D)$.
\end{ex}

We now want to gauge how far is $\recip(D)$ from being a discrete valuation ring. Suppose that $k$ has characteristic $0$, and let $X$ be a nonsingular projective curve over $k$. The \emph{Weierstrass semigroup} of $X$ at $p$ (which we denote by $H(X,p)$) is the set of all integers $s$ such that there is a rational function $f$ on $X$ whose pole divisor is exactly $s\cdot p$. Then, $H(X,p)$ is a numerical semigroup (i.e., a submonoid $S$ of $\insN$ such that $\insN\setminus S$ is finite) and the cardinality of $\insN\setminus H(X,p)$ is equal to the genus of $X$; if $H(X,p)\neq\{0,g+1,g+2,\ldots\}$, then $p$ is said to be a \emph{Weierstrass point} of $X$. Every nonsingular projective curve $X$ has only finitely many Weierstrass points. See for example \cite[Chapter 11]{kazaryan} or \cite[Chapter 2, \textsection 4]{griffiths-harris}.

A \emph{Dedekind domain} is a Noetherian domain that is locally a DVR, or equivalently a one-dimensional regular ring that is an integral domain.
\begin{prop}\label{prop:weierstrass}
Let $k$ be an algebraically closed field of characteristic $0$, and let $D$ be a Dedekind domain that is finitely generated over $k$. Suppose that $\recip(D)$ is not a field, and let $X$ be a realization of $D$ that is regular at infinity. Let $\{p\}=\overline{X}\setminus X$. Then:
\begin{enumerate}[(a)]
\item\label{prop:weierstrass:valut} if $\mathbf{v}$ is the valuation on $\local_{\overline{X},p}$ and $\mu:=\min H(X,p)\setminus\{0\}$, then
\begin{equation*}
H(X,p)\subseteq\mathbf{v}(\recip(D))\subseteq \{0,\mu,\mu+1,\ldots\};
\end{equation*}
\item\label{prop:weierstrass:Wpoint} if $p$ is not a Weierstrass point of $X$, then $\mathbf{v}(\recip(D))=H(X,p)$;
\item\label{prop:weierstrass:dvr} $\recip(D)$ is a DVR if and only if $X$ has genus $0$.
\end{enumerate}
\end{prop}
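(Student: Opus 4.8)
The plan is to push everything through the valuation $\mathbf{v}$ of the DVR $\local_{\overline{X},p}$, which by Theorem~\ref{teor:main} contains $\recip(D)$ and is its integral closure. The first move is to read off $\mathbf{v}$ on $D$ itself. Since $D=k[X]$ is the ring of rational functions on $\overline{X}$ that are regular on $X=\overline{X}\setminus\{p\}$, a nonconstant $f\in D$ has its only pole at $p$, so its pole order $-\mathbf{v}(f)$ is by definition a nonzero element of $H(X,p)$; conversely a function with pole divisor exactly $s\cdot p$ lies in $D$, so every nonzero $s\in H(X,p)$ arises this way. Thus $\mathbf{v}(\{1/f : f\in D\setminus\{0\}\})=H(X,p)$. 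I would also record that $k\subseteq\recip(D)$ (since $c=1/(1/c)$ with $1/c\in k\subseteq D$ for every $c\in k^\times$) and that $\quot(\recip(D))=\quot(D)=k(X)$, so $\recip(D)$ is exactly the $k$-subalgebra of $k(X)$ generated by the reciprocals $1/f$.

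With this dictionary, part~(\ref{prop:weierstrass:valut}) breaks into two inclusions. The lower one, $H(X,p)\subseteq\mathbf{v}(\recip(D))$, is immediate: each $s\in H(X,p)$ equals $\mathbf{v}(1/f)$ for a suitable generator $1/f\in\recip(D)$. The upper inclusion is the crux of the proposition and the step I expect to be the main obstacle, because valuation can drop under addition and so one cannot argue monomial by monomial. The device I would use is to interpose between $\recip(D)$ and $\local_{\overline{X},p}$ the ring $R_\mu:=k+\mm_{\local_{\overline{X},p}}^{\mu}$, whose nonzero elements visibly have valuation in $\{0,\mu,\mu+1,\dots\}$: a unit part contributes valuation $0$ and the rest valuation $\geq\mu$. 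One checks directly that $R_\mu$ is a subring of $\local_{\overline{X},p}$ (the product of two elements of $k+\mm^\mu$ again lies in $k+\mm^\mu$). Every generator $1/f$ lies in $R_\mu$ — a constant lands in $k$, and a nonconstant $f$ has $\mathbf{v}(1/f)\in H(X,p)\setminus\{0\}$, hence $\geq\mu$ — so the $k$-algebra they generate satisfies $\recip(D)\subseteq R_\mu$, which yields $\mathbf{v}(\recip(D))\subseteq\{0,\mu,\mu+1,\dots\}$.

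Parts~(\ref{prop:weierstrass:Wpoint}) and~(\ref{prop:weierstrass:dvr}) are then formal consequences. If $p$ is not a Weierstrass point then $H(X,p)=\{0,g+1,g+2,\dots\}$ with $\mu=g+1$, so the two ends of the sandwich in~(\ref{prop:weierstrass:valut}) coincide and force $\mathbf{v}(\recip(D))=H(X,p)$, proving~(\ref{prop:weierstrass:Wpoint}). For~(\ref{prop:weierstrass:dvr}), I would first note that $\recip(D)$ is a DVR if and only if $\recip(D)=\local_{\overline{X},p}$: a DVR is integrally closed in its fraction field $k(X)$, and by Theorem~\ref{teor:main} that integral closure is precisely $\local_{\overline{X},p}$. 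If the genus is positive then $1\notin H(X,p)$ (a numerical semigroup containing $1$ is all of $\insN$), so $\mu\geq 2$ and by~(\ref{prop:weierstrass:valut}) no element of $\recip(D)$ has valuation $1$; since $\local_{\overline{X},p}$ contains a uniformizer, $\recip(D)\neq\local_{\overline{X},p}$ and is not a DVR. If the genus is $0$ then $\overline{X}\cong\proj^1$ and $D\cong k[t]$, whence $\recip(D)=k[t^{-1}]_{(t^{-1})}$ is a DVR, as computed in the example above; this settles the equivalence.
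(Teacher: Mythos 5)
Your proof is correct, and it splits cleanly into a part that matches the paper and a part that genuinely diverges. Parts (\ref{prop:weierstrass:valut}) and (\ref{prop:weierstrass:Wpoint}) are in substance the paper's argument: the lower inclusion via functions with pole divisor exactly $s\cdot p$, the upper via the observation that every generator $1/f$ has valuation $0$ or at least $\mu$. The paper carries out the upper bound by writing an arbitrary element of $\recip(D)$ as a sum of reciprocals $1/g_1+\cdots+1/g_t$ and estimating termwise; your interposed subring $R_\mu=k+\mm_{\local_{\overline{X},p}}^{\mu}$ is the same estimate packaged more cleanly, with the small advantage that you never need the structural fact that every element of $\recip(D)$ is such a sum. (Like the paper, you should record at the outset that $\overline{X}$ is nonsingular --- $D$ Dedekind makes $X$ regular, and $X$ is regular at infinity --- so that pole divisors and $H(X,p)$ make sense.) The real divergence is in (\ref{prop:weierstrass:dvr}). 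The direction ``DVR $\Rightarrow$ genus $0$'' is identical in both proofs (integral closedness plus Theorem \ref{teor:main} give $\recip(D)=\local_{\overline{X},p}$, and the sandwich forces $\mu=1$; you state the contrapositive). For ``genus $0$ $\Rightarrow$ DVR'', the paper argues intrinsically: if $\recip(D)$ is not a DVR, then $\recip(D)\subsetneq\local_{\overline{X},p}$, and Matsuoka's formula \cite{matsuoka_degree}, by which the length of $\local_{\overline{X},p}/\recip(D)$ equals $|\insN\setminus\mathbf{v}(\recip(D))|$, forces $\mathbf{v}(\recip(D))\neq\insN$, hence $H(X,p)\neq\insN$ and the genus is positive. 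You instead invoke the classification of genus-$0$ curves ($\overline{X}\simeq\proj^1$, hence $D\simeq k[t]$) together with the computation $\recip(k[t])=k[t^{-1}]_{(t^{-1})}$ from the Example. Both routes are valid, and yours is not circular: you use the classification only in the direction where Guerrieri's theorem is not at stake. The trade-off is this: the paper's route is self-contained modulo Matsuoka's local-algebra lemma, gives the quantitative bonus that the length of $\local_{\overline{X},p}/\recip(D)$ equals the number of gaps of $\mathbf{v}(\recip(D))$ (in particular it is at most the genus), and in characteristic $0$ it actually reproves the Example's computation rather than assuming it; your route is geometrically more elementary, but it leans on the Example --- which the paper asserts without proof, though it is known from the Egyptian-domains literature --- and on the rationality of genus-$0$ curves, which is precisely the ``well-known fact'' that the paper's closing remark wants Proposition \ref{prop:weierstrass}(\ref{prop:weierstrass:dvr}) to be measured against.
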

\begin{proof}
We first note that, since $D$ is a Dedekind domain, $X$ is regular, and thus $\overline{X}$ is a regular projective curve. Moreover, $\overline{X}\setminus X=\{p\}$ is a point by Theorem \ref{teor:main}.

\ref{prop:weierstrass:valut} If $s\in H(X,p)$, let $f\in K(X)$ be a rational function whose pole divisor is exactly $s\cdot p$. Then, $f\in k[X]$, and $\mathbf{v}(1/f)$ is the order of the zero of $1/f$ at $p$, i.e., $s$; thus $s=\mathbf{v}(1/f)\in\mathbf{v}(\recip(D))$.

On the other hand, suppose $s\in\mathbf{v}(\recip(D))$, and let $f\in\recip(D)$ be such that $\mathbf{v}(f)=s$. Then, there are $g_1,\ldots,g_t\in k[X]\setminus\{0\}$ such that $f=\inv{g_1}+\cdots+\inv{g_t}$; since the constants are in $\recip(D)$, we can suppose without loss of generality that no $g_i$ is constant. Then, $\mathbf{v}\left(\inv{g_i}\right)\geq\mu$ for every $i$, and thus also $\mathbf{v}(f)\geq\mu$.

\ref{prop:weierstrass:Wpoint} follows directly from \ref{prop:weierstrass:valut} and the definition of Weierstrass point.

\ref{prop:weierstrass:dvr} If $\recip(D)$ is a DVR, then $\mathbf{v}(\recip(D))=\insN$, and thus we must have $\mu=1$, i.e., $H(X,p)=\insN$ and $X$ has genus $0$. Suppose $\recip(D)$ is not a DVR: since $\recip(D)$ contains $k$ and the composition $k\longrightarrow\recip(D)\longrightarrow\recip(D)/\mm_{\recip(D)}=k$ is surjective, by \cite[Proposition 1]{matsuoka_degree} the length of $\local_{\overline{X},p}/\recip(D)$ is equal to $|\insN\setminus\mathbf{v}(\recip(D))|>0$. Hence $\mathbf{v}(\recip(D))\neq\insN$, and thus $H(X,p)\neq\insN$. Therefore, the genus of $X$ is positive.
\end{proof}

Theorem 6.1 of \cite{guerrieri-recip} states that, if $D$ is a domain and $\recip(D)$ is a one-dimensional DVR, then $D$ must be isomorphic to the polynomial ring $k[t]$ for some field $k$. The last point of Proposition \ref{prop:weierstrass} shows that, in the context of one-dimensional regular $k$-algebras (when $k$ is of characteristic $0$), this result is a reformulation of the well-known fact that every nonsingular projective curve of genus $0$ is isomorphic to the projective line.

\bibliographystyle{plain}
\bibliography{/bib/articoli,/bib/libri,/bib/miei}

\begin{thebibliography}{10}

\bibitem{atiyah}
M.~F. Atiyah and I.~G. Macdonald.
\newblock {\em Introduction to {C}ommutative {A}lgebra}.
\newblock Addison-Wesley Publishing Co., Reading, Mass.-London-Don Mills, Ont.,
  1969.

\bibitem{epstein-egypt-23b}
Neil Epstein.
\newblock The unit fractions from a {E}uclidean domain generate a {DVR}.
\newblock {\em preprint, arXiv:2305.16518}.

\bibitem{EGL-RD-polinomyal}
Neil Epstein, Lorenzo Guerrieri, and Alan~K. Loper.
\newblock The reciprocal complement of a polynomial ring in several variables
  over a field.
\newblock {\em preprint, arXiv:2407.15637}.

\bibitem{fulton}
William Fulton.
\newblock {\em Algebraic curves}.
\newblock Advanced Book Classics. Addison-Wesley Publishing Company, Advanced
  Book Program, Redwood City, CA, 1989.
\newblock An introduction to algebraic geometry, Notes written with the
  collaboration of Richard Weiss, Reprint of 1969 original.

\bibitem{gilmer}
Robert Gilmer.
\newblock {\em Multiplicative {I}deal {T}heory}.
\newblock Marcel Dekker Inc., New York, 1972.
\newblock Pure and Applied Mathematics, No. 12.

\bibitem{goodman-affineopen}
Jacob~Eli Goodman.
\newblock Affine open subsets of algebraic varieties and ample divisors.
\newblock {\em Ann. of Math. (2)}, 89:160--183, 1969.

\bibitem{griffiths-harris}
Phillip Griffiths and Joseph Harris.
\newblock {\em Principles of algebraic geometry}.
\newblock Wiley Classics Library. John Wiley \& Sons, Inc., New York, 1994.
\newblock Reprint of the 1978 original.

\bibitem{guerrieri-recip}
Lorenzo Guerrieri.
\newblock The reciprocal complements of classes of integral domains.
\newblock {\em preprint, arXiv:2411.00616}.

\bibitem{GLO-egyptian}
Lorenzo Guerrieri, Alan~K. Loper, and Greg Oman.
\newblock From ancient {E}gyptian fractions to modern algebra.
\newblock {\em J. Algebra Appl.}, to appear.

\bibitem{kazaryan}
Maxim~E. Kazaryan, Sergei~K. Lando, and Victor~V. Prasolov.
\newblock {\em Algebraic curves}, volume~2 of {\em Moscow Lectures}.
\newblock Springer, Cham, 2018.
\newblock Towards moduli spaces, Translated from the 2018 Russian original by
  Natalia Tsilevich.

\bibitem{matsuoka_degree}
Tadayuki Matsuoka.
\newblock On the degree of singularity of one-dimensional analytically
  irreducible noetherian local rings.
\newblock {\em J. Math. Kyoto Univ.}, 11:485--494, 1971.

\end{thebibliography}
\end{document}